\theoremstyle{definition}
\newtheorem{Definition}{Definition}[section]
\newtheorem{Theorem}{Theorem}[section]
\newtheorem{Proposition}{Proposition}[section]
\newtheorem{Corollary}{Corollary}[section]
\newtheorem{Lemma}{Lemma}[section]
\newtheorem{Remark}{Remark}[section]
\author[*]{Yufan Li}
\author[**]{Jeffery Rosenthal}
\affil[*]{John A. Paulson School Of Engineering And Applied Sciences, Harvard University, USA. Electronic address: \texttt{yufan\_li@g.harvard.edu}; Corresponding author}
\affil[**]{Department of Statistics, University of Toronto, Canada. Electronic address: \texttt{jeff@math.toronto.edu}}
\begin{document}

\title{A Divergent Random Walk on Stairs}

\maketitle

\begin{abstract}
	We consider a state-dependent, time-dependent, discrete random walks $X_t^{\{a_n\}}$ defined on natural numbers $\mathbb{N}$ (bent to a ``stair'' in $\mathbb{N}^2$) where the random walk depends on input of a positive deterministic sequence $\{a_n\}$. This walk has the peculiar property that if we set $a_n$ to be $+\infty$ for all $n$, it converges to a stationary distribution $\pi(\cdot)$;  but if $a_n$ is uniformly bounded (over all $n$) by any upper bound $a \in (0,\infty)$, this walk diverges to infinity with probability 1. It is thus interesting to consider the intermediate case where $a_n<\infty$for all $n$ but eventually tends to $+\infty$. 
	
	\citep{latuszynski2013adaptive} first defined this walk and conjectured that a particular choice of sequence $\{a_n\}$ exists such that (i) $a_n \to \infty$ and, (ii) $P(X_t^{\{a_n\}} \to \infty )=1$. They managed to construct a sequence $\{a_n\}$ that satisfies (i) and $P(X_t^{\{a_n\}}\to \infty)>0$, which is weaker than (ii).  
	
	In this paper, we obtain a stronger result: for any $\sigma<1$, there exists a choice of $\{a_n\}$ so that $P(X_t\to \infty)\ge \sigma$. Our result does not apply when $\sigma=1$, the original conjecture remains open. We record our method here for technical interests. 
\end{abstract}

\tableofcontents

\section{Random Walk on Stairs and Conjecture}
This problem was first proposed in \cite{latuszynski2013adaptive} where one may find the original motivation (related to adaptive Gibbs sampler in Monte Carlo literature), precise statement of the unproven conjecture and their attempt. We will re-state the problem here. 

Let $\mathcal{N}=\{1,2....\}$ and let the state space $\mathcal{X}=\{(i,j) \in \mathcal{N} \times \mathcal{N}, i=j \text{ or } i=j+1\}$. This describes the ``stair'' in $\mathbb{R}^2$ as shown in Figure \ref{fig:screenshot001}. Define a probability distribution $\pi(\cdot)$ on $\mathcal{X}$ by $\pi(i,j) \propto j^{-2}, \forall (i,j) \in \mathcal{X}$. Now we are ready to specify random walk on stairs mentioned in abstract. 

\begin{figure}[H]
	\centering
	\includegraphics[width=0.7\linewidth]{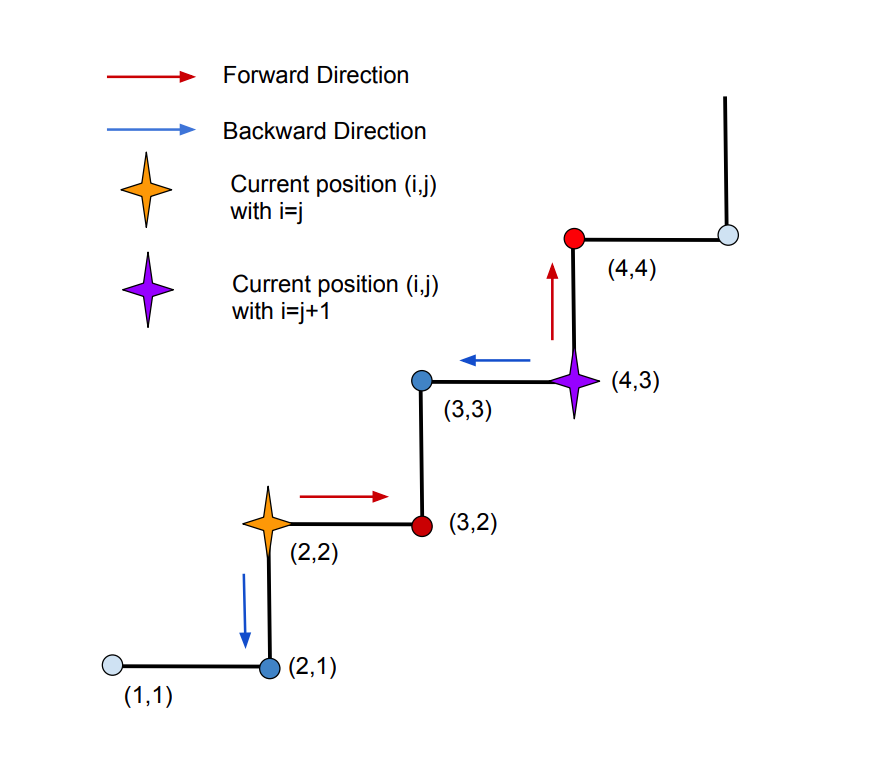}
	\caption{Random Walk on Stairs}
	\label{fig:screenshot001}
\end{figure}

\begin{Definition}[Random Walk on Stairs] \label{RWoS}
	We illustrate the definition in Figure \ref{fig:screenshot001}. With respect to a fixed deterministic, positive sequence $\{a_n\}$ satisfying $8\le a_n \nearrow \infty,$ we define a random walk $X_t, t=0,1,2,...$ (denoted as $X_t^{\{a_n\}}$ in abstract) on the stair $\mathcal{X}$ by first defining $X_0=(1,1)$ and then by the induction as follows: 
	
	\begin{itemize}
		\item If $X_{t-1}=(i,j)$ such that $i=j$ (yellow star),
		\begin{itemize}
			\item with probability $\frac{1}{2}-\frac{4}{a_n}$, the walk choose to move along the backward direction (blue arrow, parallel to $y-$axis) where it further choose to move backwards one step to the new state $X^\star$ (colored blue) with probability $p^\star=\frac{\pi(X^\star)}{\pi(X^\star)+\pi(X_{n-1})}$ or stay put at current location with probability $1-p^\star$;
			\item with probability $\frac{1}{2}+\frac{4}{a_n}$, the walk choose to move along the forward direction (red arrow, parallel to $x-$axis) where it further choose to move forwards one step to the new state $X^\star$ (colored red) with probability $p^\star=\frac{\pi(X^\star)}{\pi(X^\star)+\pi(X_{n-1})}$ or stay put at current location with probability $1-p^\star$;
		\end{itemize}
		
		\item If $X_{t-1}=(i,j)$ such that $i=j+1$ (purple star),
		\begin{itemize}
			\item with probability $\frac{1}{2}+\frac{4}{a_n}$, the walk choose to move along the backward direction (blue arrow, parallel to $x$-axis) where it further choose to move backwards one step to the new state $X^\star$ (colored blue) with probability $p^\star=\frac{\pi(X^\star)}{\pi(X^\star)+\pi(X_{n-1})}$ or stay put at current location with probability $1-p^\star$;
			\item with probability $\frac{1}{2}-\frac{4}{a_n}$, the walk choose to move along the forward direction (red arrow, parallel to $y$-axis) where it further choose to move forwards one step to the new state $X^\star$ (colored red) with probability $p^\star=\frac{\pi(X^\star)}{\pi(X^\star)+\pi(X_{n-1})}$ or stay put at current location with probability $1-p^\star$;
		\end{itemize}

	\end{itemize}
	
\end{Definition}
\bigskip
\begin{Remark}
	The assertion made in the abstract can be easily verified. As $a_n=\infty$ for all $n$, the random walk on stair degenerates to a classic random-scan Gibbs sampler that converges to $\pi$. On the other hand, if $a_n=a<\infty$ (uniformly bounded), the random walk may be stochastically lower bounded by a simple random walk on $\mathbb{N}$ biased towards infinity. 
\end{Remark}

Now, we are ready to formally state the conjecture: \\

\begin{Definition}[The Open Conjecture]
	Does there exist a choice of $a_n \to \infty, a_n \ge 8$ so that the random walk defined in Definition \ref{RWoS} converges to $\infty$ with probability 1?
\end{Definition}
\bigskip 
\begin{Remark}
	As stated in the abstract, \cite{latuszynski2013adaptive} found a sequence $\{a_n\}$ such that $P(X_t \to \infty)>0$. 
\end{Remark}

\section{Our Result}

To the problem posed in last section, we are able to state the following:\\

\begin{Theorem}[Main Result]\label{mainres}
	For any fixed $\sigma \in [0,1)$, there exists a choice of ${a}_n$ such that $P({X}_n \rightarrow \infty)>\sigma$.
\end{Theorem}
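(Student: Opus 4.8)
The plan is to analyze the walk via its one-dimensional projection onto the "level" $n$ (the coordinate $j$, roughly), and to exploit the fact that near level $n$ the bias parameter is $a_n$. First I would set up a comparison: let $T_n$ denote the (random) time the walk first reaches level $n$, and track the process only at the levels it visits, so the relevant drift at level $n$ is governed by $a_{T_n}$. The key point is that the backward/forward asymmetry $\pm 4/a_n$ is tiny when $a_n$ is large, so near level $n$ the walk behaves almost like a $\pi$-reversible random-scan Gibbs sampler, which is recurrent on $\mathcal{X}$ with $\pi(i,j)\propto j^{-2}$; but because $\pi$ has finite mass, even the reversible walk has a positive chance of escaping to infinity from any finite level, and the small forward bias $4/a_n$ can only help. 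So the mechanism is: from level $n$, with probability bounded below by some $q_n$ (depending on how long the walk has already taken, hence on $a_{T_n}$) the walk escapes to infinity without ever returning below level $n$.

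The main steps, in order: (1) Derive a clean formula or lower bound for the probability $h(n,m)$ that, started at level $n$ with current time-index giving bias parameter $a$, the walk ever drops to level $m<n$; this is a gambler's-ruin-type computation on $\mathbb{N}$ with state-dependent holding probabilities $1-p^\star$ and the two-step "stair" structure, and it should reduce to a product/series in which the $\pi$-ratios $j^{-2}$ telescope. (2) Show that $\sum_n (1 - (\text{escape prob from level } n)) $ can be made finite — equivalently, that $\prod_n (\text{prob of never returning below } n)$ is $>\sigma$ — provided $a_n$ grows fast enough; here is where we choose $\{a_n\}$. Because $\pi(\mathcal X)<\infty$, the $a_n=\infty$ escape probability from level $n$ already tends to $1$ as $n\to\infty$ at a rate like $1 - c/n$ or better (sum of $j^{-2}$ tails), so the product over all $n$ of these is already positive; the finite-$a_n$ correction is a perturbation of order $1/a_n$, and choosing $a_n$ to grow fast (say $a_n \uparrow \infty$ with $\sum 1/a_n<\infty$, or faster) keeps the product above $\sigma$. (3) Handle the coupling between "time elapsed" and "level reached": we need $a_{T_n}$ to be large, i.e. we must rule out the walk reaching high levels too quickly (before $a_t$ has grown). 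This is done by noting $a_n$ is a free parameter — we may first fix a fast-growing target schedule in terms of level, then inflate $a_n$ as a function of $n$ (the time index) so that with high probability $T_n$ is large enough that $a_{T_n}$ meets the target; a Borel–Cantelli / union-bound argument over levels, absorbing the failure probability into $1-\sigma$, closes this.

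I expect step (3) — the interplay between the time-index $n$ at which the bias $a_n$ is read off and the spatial level the walk currently occupies — to be the main obstacle, since this is precisely the gap between $P(X_t\to\infty)>0$ and $=1$ that defeated the original attempt. The walk spends a long (but random, and a priori possibly short with small probability) time dithering near each level because of the holding probabilities $1-p^\star$, and one must quantify this to guarantee $a_{T_n}\to\infty$ fast. The saving grace for the weaker statement $P(X_t\to\infty)>\sigma$ (rather than $=1$) is that we are allowed a total failure budget of $1-\sigma>0$: we can afford a uniformly-summable sequence of bad events (walk escapes a level too fast, or returns below a level) and still conclude. Concretely I would: choose an increasing sequence of levels $\ell_1<\ell_2<\dots$ and error budget $\varepsilon_k$ with $\sum\varepsilon_k < 1-\sigma$; choose $a_n$ so large that (a) conditioned on reaching level $\ell_k$ at any time, the chance of ever returning below $\ell_{k-1}$ is $<\varepsilon_k$, and (b) the chance the walk passes from $\ell_{k-1}$ to $\ell_k$ "too fast" (before the time index is large enough to make (a) hold) is also $<\varepsilon_k$; then on the complement of all these bad events the walk is nondecreasing through the $\ell_k$ eventually, hence $X_n\to\infty$, and this complement has probability $\ge \sigma$.
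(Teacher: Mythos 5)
Your proposal rests on a premise that is false, and the falsehood propagates into the key choice of $\{a_n\}$, where you go in exactly the wrong direction. You assert that ``because $\pi$ has finite mass, even the reversible walk has a positive chance of escaping to infinity from any finite level.'' The opposite is true: finite total mass of $\pi$ together with $\pi$-reversibility and irreducibility makes the $a_n\equiv\infty$ walk \emph{positive recurrent}, so its escape probability from every level is exactly $0$ (concretely, for the flattened birth--death chain the transience criterion $\sum_k 1/(\pi_k p_k)<\infty$ fails, since $\pi_k\asymp k^{-2}$ makes that series diverge like $\sum k^2$). There is no ``already positive'' product of escape probabilities to perturb. The only source of transience in this model is the bias $\pm 4/a_n$, and a short drift computation shows it wins only when $a_n$ is \emph{small relative to the current height}: at height $x$ the backward pull from the acceptance ratios is of order $1/x$, while the forward push is of order $1/a_n$, so the net drift is positive roughly when $a_n\lesssim 16x$. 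Your step (2) prescribes taking $a_n$ to grow \emph{fast} (e.g. $\sum 1/a_n<\infty$); with such a choice the walk rapidly approximates the recurrent reversible chain and $P(X_t\to\infty)$ will not approach $1$ --- the construction fails at its core.

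The paper does the reverse. It flattens the stair to a walk $S_n$ on $\mathbb{N}$, partitions time into phases of slowly growing length, and on phase $i$ sets $a_n$ just below the threshold $8\frac{2i^2-2i+1}{2i-0.9}$ (so $a_n$ grows only linearly in the phase index, roughly like $\sqrt{n}$ in the time index), which guarantees that \emph{as long as the walk is at height at least $i$ during phase $i$} each increment stochastically dominates an i.i.d.\ variable with mean $\ge 0.01$. Hoeffding's inequality then gives a fixed overshoot ($+4$ in height) per phase except on events of summable probability $2/(M-2+2(i-2))^{2K}$, an induction keeps the height condition valid phase after phase, and letting the initial phase length $M$ grow pushes the product of success probabilities above $\sigma$. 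Your step (3) correctly identifies that the time-index/height coupling is the crux, but even there your worry is inverted: the danger is not that the walk climbs \emph{too fast} before $a_t$ grows, but that it sits \emph{too low} once $a_t$ has grown, killing the drift. If you want to salvage your outline, replace ``escape probabilities of a nearly-reversible walk'' with ``uniform positive drift maintained by keeping $a_n$ below a height-dependent threshold,'' and the failure-budget bookkeeping you describe at the end then matches the paper's product bound.
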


That is, for any fixed $\sigma<1$, we are able to construct a sequence $a_n$ so that the random walk (with parameter of $a_n$) diverges to infinity with probability larger than $\sigma$.

\section{Proof Strategy and Intuition}
An important observation is that when $a_n=\infty$, the random walk defined in last section would converge to $\pi$ and will never diverge to infinity. On the contrary, if $a_n=a<\infty$, the process will diverge to infinity with probability 1. Therefore, the intuition is that we should ``slow down'' growth rate of  $a_n$ sufficiently to obtain the desired divergent result. The overall strategy of our proof is to divide steps of the process into different ``phases'' where $a_n$ stays constant in each phase. For example, we would categorize first some $N_1$ step as phase $1$, and some $N_2$ steps as phase $2$, and so forth. The hope is that we would be able to find appropriate duration of each phase, i.e. some choice of $\{N_i\}$, so that the probability of the process residing anywhere below the ``height'' $y=i$ is small after phase $i$.

\section{Proof of the Main Theorem}
In this section, we give a proof of the main theorem. 

\subsection{``Flatten the Stair''}
\bigskip
\begin{Definition}[``Flatten the Stair'']
	Let $S_n$ denote the ``distance'' from $X_n$ to starting position $(1,1)$. If $X_{n}=(x_{n},x_{n})$, $S_n=2(x_n-1)$; if $X_{n}=(x_{n},x_{n}-1)$, $S_n=2(x_n-1)-1=2x_n-3$.
\end{Definition}
\bigskip
\begin{Remark}
	Note that that the random walk on stair is just a usual random walk $S_n$ on natural number with each step following distribution specified in \ref{Eachstp}. We just need to show that $S_n$ goes to infinity. 
\end{Remark}

First we first note the following Lemma:
\bigskip
\begin{Lemma}[Distribution of $S_n-S_{n-1}$]\label{Eachstp}
	If $X_{n}=(x_{n},x_{n})$, the distribution of $S_{n+1}-S_n$, taking value $\{-1,0,1\}$, is
	\begin{equation}\label{Sup}
	\bigg((\frac{1}{2}-\frac{4}{a_n})\frac{x_{n}^2}{x_{n}^2+(x_{n}-1)^2},1-(\frac{1}{2}-\frac{4}{a_n})\frac{x_{n}^2}{x_{n}^2+(x_{n}-1)^2}-(\frac{1}{4}+\frac{2}{a_n}),\frac{1}{4}+\frac{2}{a_n}\bigg)
	\end{equation}
	
	If $X_{n}=(x_{n},x_{n}-1)$, the distribution of $S_{n+1}-S_n$, taking value $\{-1,0,1\}$, is
	\begin{equation}\label{Sdown}
	\bigg(\frac{1}{4}-\frac{2}{a_n},1-(\frac{1}{4}-\frac{2}{a_n})-(\frac{1}{2}+\frac{4}{a_n})\frac{(x_{n}-1)^2}{x_{n}^2+(x_{n}-1)^2},(\frac{1}{2}+\frac{4}{a_n})\frac{(x_{n}-1)^2}{x_{n}^2+(x_{n}-1)^2}\bigg)
	\end{equation}
\end{Lemma}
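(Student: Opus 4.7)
The lemma is essentially a bookkeeping translation from the two-dimensional description of the walk on the stair to the one-dimensional process $S_n$, so the plan is a direct, case-by-case computation from Definition \ref{RWoS}. The first step is to verify the support of the increment: in a single step $X_n$ either stays put or moves to one of its two stair neighbors, and by the flattening formulas $(x,x)\mapsto 2(x-1)$ and $(x,x-1)\mapsto 2x-3$, each such move shifts $S$ by exactly $\pm 1$; hence $S_{n+1}-S_n\in\{-1,0,+1\}$, and it suffices to compute the $\pm 1$ probabilities with the middle entry being the complement.

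For Case 1 ($X_n=(x_n,x_n)$, a yellow star), I would enumerate the two candidate neighbors. The backward ($y$-axis) neighbor $(x_n,x_n-1)$ is proposed with directional probability $\frac{1}{2}-\frac{4}{a_n}$, and since $\pi(i,j)\propto j^{-2}$, the Barker acceptance collapses to $p^\star=\frac{(x_n-1)^{-2}}{(x_n-1)^{-2}+x_n^{-2}}=\frac{x_n^2}{x_n^2+(x_n-1)^2}$; this move produces $\Delta S=-1$. The forward ($x$-axis) neighbor $(x_n+1,x_n)$ is proposed with probability $\frac{1}{2}+\frac{4}{a_n}$, has Barker factor $\tfrac{1}{2}$ (since both endpoints share $j=x_n$), and produces $\Delta S=+1$ with combined probability $\frac{1}{4}+\frac{2}{a_n}$. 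Multiplying gives the $\pm 1$ entries of \eqref{Sup}; the middle entry is the complement.

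Case 2 ($X_n=(x_n,x_n-1)$, a purple star) is the mirror image: the backward ($x$-axis) neighbor $(x_n-1,x_n-1)$ shares $j=x_n-1$ with $X_n$, so its Barker factor is $\tfrac{1}{2}$; the forward ($y$-axis) neighbor $(x_n,x_n)$ has different $j$, so its Barker factor is $\frac{(x_n-1)^2}{x_n^2+(x_n-1)^2}$. Combining these with the appropriate directional probabilities from the definition yields the three entries of \eqref{Sdown}.

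I do not anticipate any substantive obstacle; the lemma is mechanical. The only traps worth flagging in the write-up are (i) correctly swapping the roles of ``forward''/``backward'' and of the $x$- and $y$-axes when moving between yellow and purple states, so that the $\pm\frac{4}{a_n}$ factors land on the correct rows, and (ii) using the Barker acceptance $p^\star=\pi(X^\star)/(\pi(X^\star)+\pi(X_n))$ exactly as specified, rather than the more common Metropolis--Hastings ratio $\min(1,\pi(X^\star)/\pi(X_n))$. A useful sanity check along the way is that every $x$-axis proposal on the stair automatically has $p^\star=\tfrac{1}{2}$ because $\pi$ depends only on $j$, so any nontrivial $x_n$-dependence in the final formulas must come from $y$-axis moves.
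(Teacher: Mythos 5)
Your approach is the same as the paper's (the paper's own proof is just the one-line assertion that the lemma ``follows directly from the definition''), and your Case 1 is a complete and correct derivation of \eqref{Sup}: the backward $y$-axis neighbour $(x_n,x_n-1)$ carries Barker factor $\frac{x_n^2}{x_n^2+(x_n-1)^2}$ and gives $\Delta S=-1$, the forward $x$-axis neighbour $(x_n+1,x_n)$ carries factor $\tfrac12$ and gives $\Delta S=+1$, and the support claim $S_{n+1}-S_n\in\{-1,0,1\}$ does follow from the flattening formulas exactly as you say. Your observation that every $x$-axis proposal has $p^\star=\tfrac12$ because $\pi$ depends only on $j$ is also correct and is a genuinely useful check.

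Case 2 is where the phrase ``the appropriate directional probabilities from the definition'' conceals a real problem. Your Barker factors there are right ($\tfrac12$ for the backward move to $(x_n-1,x_n-1)$, and $\frac{(x_n-1)^2}{x_n^2+(x_n-1)^2}$ for the forward move to $(x_n,x_n)$). But Definition \ref{RWoS}, as literally written, assigns probability $\frac12+\frac{4}{a_n}$ to the \emph{backward} direction and $\frac12-\frac{4}{a_n}$ to the \emph{forward} direction at a purple state; combining those with your Barker factors yields the triple $\big(\frac14+\frac{2}{a_n},\;\cdot\;,(\frac12-\frac{4}{a_n})\frac{(x_n-1)^2}{x_n^2+(x_n-1)^2}\big)$, which is not \eqref{Sdown}. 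To obtain \eqref{Sdown} you must interchange the two directional probabilities at purple states, so that the walk favours the forward direction from both types of states. That interchanged version is certainly what is intended --- the rest of the paper depends on it (the phase-1 claim that $S_{n+1}-S_n\ge 0$ almost surely when $a_n=8$, and the drift inequality \eqref{upeq}, both fail under the literal reading of the purple-star bullet), so the typo sits in the Definition rather than in the Lemma --- but your write-up should state explicitly which assignment it uses and why, rather than leaving the choice to the word ``appropriate.'' As it stands, a literal application of Definition \ref{RWoS} to Case 2 does not produce \eqref{Sdown}, so the mechanical computation you describe needs this one correction made visible.
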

\begin{proof}
	This follows directly from definition of the random walk in Definition \ref{RWoS} and that $S_n$ is just a ``flattened version'' of the walk.
\end{proof}

\subsection{Define Phases and $a_n$}
Fix any $\sigma \in [0,1)$. The main theorem of this paper is that if one defines $a_n$ as the following ($a_n$ depends on $\sigma$), then the walk diverges to infinity with probability larger than $1-\sigma$. Our construction partition entire time line (i.e. $\mathbb{N}$) into countably many phases with phase $i$ of length $N_i-N_{i-1}$, and that $a_n$ is defined to be the same number on each phase. 

At this point, some of definitions below may not be well-motivated, i.e. it is not obvious why we choose to define them in this particular fashion. But the point is that $a_n$ and thus the random walk on stairs for each $\sigma$ is well-defined and satisfies requirements posed by the conjecture. The motivation will become clear as the proof moves forward in the following sections.
\\

\begin{Definition}[Define Parameters Required to Define $N_i$]
	Fix any $M\in \mathbb{N}$ such that for all $m \ge M-2$
	\[0.01\cdot(m)-2 \sqrt{K\cdot (m)\ln(m)}\ge 4\]
	Note that $M$ exists for any fixed natural number $K$ because the first term is of order $m=\sqrt{m\cdot m}$ and the second term is of smaller order $\sqrt{m \ln(m)}$. 
\end{Definition}

\bigskip
\begin{Definition}[Define Duration of Each Phases $i$]\label{DefNi}
	Define sequence $N_i$ inductively such that $N_0=0$, $N_1=M_0(\sigma,M)$ and $N_{i}-N_{i-1}=M-2+2(i-2), i=2,3,...$; $M_0(\sigma,M)$ could be any natural number satisfying that for all $m \ge M_0(\sigma,M)$, $$P(\sum_{j=1}^{m} B_j>M)>1-\sigma$$ where $B_j$ are i.i.d Bernoulli variables with $P(B_j=1)=1/5$ and $P(B_j=0)=4/5$. Such $M_0(\sigma,M)$ exists by Central Limit Theorem. 
\end{Definition}

\bigskip
\begin{Definition}[Define choice of $a_n$ for fixed $\sigma$]\label{defan}
	Define $a_n$ as follows:
	\begin{equation}
	\begin{cases}
	a_n=8, \; \textrm{if}\; 0 \le n < N_1 \\
	a_n=8\frac{2i^2+1-2i}{2i-1+0.1}-0.001, \; \textrm{if}\; N_{i-1} \le n < N_{i}, \forall i\ge 2
	\end{cases}
	\end{equation}
\end{Definition}

\bigskip
\begin{Lemma}[Choice $a_n$ Satisfies Requirement]
	(i) $a_n \to \infty$;
	(ii) $a_n \ge 8$.
\end{Lemma}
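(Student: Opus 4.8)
The plan is to verify both claims directly from the explicit formula in Definition~\ref{defan}. For (ii), since $a_n = 8$ on the first phase, it suffices to check that $8\frac{2i^2+1-2i}{2i-1+0.1} - 0.001 \ge 8$ for every integer $i \ge 2$; rearranging, this amounts to showing $8(2i^2 + 1 - 2i) - 0.001(2i-1+0.1) \ge 8(2i-1+0.1)$, i.e.\ a quadratic inequality in $i$ that is comfortably satisfied for $i \ge 2$ (the leading term $16i^2$ dominates). I would just substitute $i=2$ to see the inequality holds with room to spare, then note the left side minus right side is increasing in $i$.

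For (i), the point is that $\frac{2i^2+1-2i}{2i-1+0.1}$ grows linearly in $i$: the numerator is $\Theta(i^2)$ and the denominator is $\Theta(i)$, so the ratio tends to $+\infty$ as $i \to \infty$. Hence $a_n$, which on phase $i$ equals $8$ times this ratio minus a constant, also tends to $+\infty$. Since the phases exhaust $\mathbb{N}$ (the phase lengths $N_i - N_{i-1} = M-2+2(i-2)$ are positive for $i$ large and eventually strictly increasing, so $N_i \to \infty$), every $n$ eventually lies in a phase with arbitrarily large index, giving $a_n \to \infty$. One should also note that $\{a_n\}$ as defined is nondecreasing — this requires checking that the phase-$i$ value is at least the phase-$(i-1)$ value (and that the phase-$2$ value is at least $8$, already covered by (ii)) — which follows from monotonicity of the linear-growth ratio; this monotonicity is needed to match the hypothesis $8 \le a_n \nearrow \infty$ in Definition~\ref{RWoS}.

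I do not anticipate a genuine obstacle here: both parts are elementary manipulations of a rational function of $i$. The only mild care needed is bookkeeping at the boundary between phase $1$ (where $a_n=8$) and phase $2$, and confirming the subtracted $0.001$ does not push any value below $8$ — which is why the formula carries the specific constants $+0.1$ and $-0.001$ rather than cleaner ones. I would present the verification as a short computation: exhibit the quadratic $8(2i^2+1-2i) - 8(2i-1+0.1) - 0.001(2i-1+0.1) = 16i^2 - 32i + 15.2 - 0.002i + 0.0009$ and observe it is positive for all $i \ge 2$, then separately remark that the ratio $\to \infty$ handles (i) and that the difference of consecutive phase values is positive handles monotonicity.
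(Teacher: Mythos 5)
Your proposal is correct and follows the same route as the paper, which simply asserts that (ii) is an algebra exercise (showing $a_n$ increases for $n\ge 2$ and the phase-$2$ value exceeds $8$) and that (i) holds because each $N_i$ is finite; you merely carry out the computation the paper leaves implicit. The explicit quadratic $16i^2 - 32i - 0.002i + 15.2 + 0.0009 > 0$ for $i \ge 2$ and the observation that the ratio grows linearly in $i$ are exactly the checks needed.
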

\begin{proof}
	(i) is true since $N_i$ is finite for each $i$; (ii) is a just an algebra exercise: it is easy to show that $a_n$ increases for $n \ge 2$ and $a_2>8$.
\end{proof}

\subsection{Define Auxiliary Variables}
The idea is that, for $i$-th phase, we construct a auxiliary variable $Z_i$ upon some condition so that $\mathbb{E}(Z_i)\ge 0.01$ (that is, $Z_i$ is ``increasing on average'') and it is stochastically smaller for each step $S_n-S_{n-1}$ within $i-$th phase. Note that $x_n \ge i$ condition is crucial--that is, we must ensure that at phase $i$, distance between $X_n$ and $y-$axis is at least $n$ for an auxiliary $Z_i$ to exist. \\

\begin{Proposition}[Define $Z_i$ for Phase $i$]
	For each $i \ge 2$, if $x_n \ge i$ and $N_{i-1}\le n <N_{i}$, there exists a sequence of i.d.d random variable $\{Z_i\}$ such that $Z_i$ is stochastically smaller than $S_{n+1}-S_{n}$ for each $n \in [N_{i-1}, N_{i})$ and $Z_{i}$ take value $\{-1,0,1\}$ and $\mathbb{E}(Z_i)\ge 0.01$
\end{Proposition}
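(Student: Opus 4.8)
The plan is to produce a single distribution $\mu$ on $\{-1,0,1\}$ that is dominated (in the usual stochastic order) by the step law \eqref{Sup}--\eqref{Sdown} for every state with $x_n\ge i$ and every $n$ in phase $i$, and then to check that the mean of $\mu$ is at least $0.01$. The auxiliary variables $Z_i$ are then just an i.i.d.\ sequence with law $\mu$; stochastic domination of each increment by a \emph{common} $Z_i$ is exactly what lets us later compare the walk on phase $i$ with a sum of i.i.d.\ variables. First I would observe that on phase $i$ the value of $a_n$ is the constant $a^{(i)} := 8\frac{2i^2+1-2i}{2i-1+0.1}-0.001$, so the only remaining dependence is on the current state through the factors $\frac{x_n^2}{x_n^2+(x_n-1)^2}$ and $\frac{(x_n-1)^2}{x_n^2+(x_n-1)^2}$.

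Next I would identify, for each of the two state types, the ``worst case'' over $x_n\ge i$ in the stochastic order. For a $\{-1,0,1\}$-valued law, being stochastically smallest means having the largest possible mass on $-1$ and the smallest possible mass on $+1$. For the diagonal states \eqref{Sup}, the $+1$-probability $\tfrac14+\tfrac2{a_n}$ does not depend on $x_n$, while the $-1$-probability $(\tfrac12-\tfrac4{a_n})\frac{x_n^2}{x_n^2+(x_n-1)^2}$ is decreasing in $x_n$ (the fraction decreases from... toward $1/2$ as $x_n\to\infty$, so it is \emph{largest} when $x_n$ is smallest, i.e.\ $x_n=i$); hence among diagonal states the worst case is $x_n=i$. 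For the off-diagonal states \eqref{Sdown}, the $-1$-probability $\tfrac14-\tfrac2{a_n}$ is constant and the $+1$-probability $(\tfrac12+\tfrac4{a_n})\frac{(x_n-1)^2}{x_n^2+(x_n-1)^2}$ is increasing in $x_n$, so its smallest value over $x_n\ge i$ is again at $x_n=i$. So it suffices to dominate the two explicit laws obtained by plugging $x_n=i$, $a_n=a^{(i)}$ into \eqref{Sup} and \eqref{Sdown}; I would then take $\mu$ to put mass $\max$ of the two $-1$-probabilities on $-1$, $\min$ of the two $+1$-probabilities on $+1$, and the rest on $0$ — this single $\mu$ is stochastically below both, hence below every relevant increment.

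The remaining step is the arithmetic: show $\mathbb{E}_\mu(Z_i) = P(Z_i=1)-P(Z_i=-1)\ge 0.01$. Plugging $x_n=i$ gives $\frac{i^2}{i^2+(i-1)^2}$ and $\frac{(i-1)^2}{i^2+(i-1)^2}$, whose denominator is $2i^2-2i+1 = 2i^2+1-2i$; the particular value $a^{(i)}=8\frac{2i^2+1-2i}{2i-1+0.1}-0.001$ is reverse-engineered precisely so that $(\tfrac12+\tfrac4{a^{(i)}})\frac{(i-1)^2}{2i^2-2i+1} - (\tfrac12-\tfrac4{a^{(i)}})\frac{i^2}{2i^2-2i+1}$ and the analogous off-diagonal quantity both land a definite constant above $0$; I would verify that the $-0.001$ slack and the ``$+0.1$'' in the denominator are chosen so the bound is comfortably $\ge 0.01$ for all $i\ge 2$, and also re-check the $i=2$, $a_n=8$ boundary case where the phase index and the formula meet. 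This is routine but must be done carefully, and indeed I expect this explicit inequality — verifying that the engineered value of $a^{(i)}$ really yields mean $\ge 0.01$ uniformly in $i$, including at the seams between phases — to be the only real obstacle; the stochastic-domination argument itself is soft once the monotonicity in $x_n$ is pinned down.
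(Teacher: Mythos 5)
Your proposal is correct and follows essentially the same route as the paper: exploit the monotonicity in $x_n$ of the two fractions to reduce to the worst case $x_n=i$, take the larger of the two $-1$-masses and the smaller of the two $+1$-masses as the law of $Z_i$, and verify the mean via the engineered value of $a^{(i)}$ (the paper solves $(\tfrac12+\tfrac4{a})\tfrac{(i-1)^2}{2i^2-2i+1}-(\tfrac12-\tfrac4{a})\tfrac{i^2}{2i^2-2i+1}>0.1$ to get exactly $a<8\tfrac{2i^2-2i+1}{2i-1+0.1}$, which the $-0.001$ guarantees, giving mean $>0.1-0.0002$). The only cosmetic differences are that the paper adds $\pm 0.0001$ slack to the two masses and explicitly checks that they sum to less than $1$, a small sanity check you should also record.
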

\begin{proof}
	Since $x_n\ge i$, and $a_n$ is constant for $N_{i-1}\le n <N_{i}$
	
	\begin{equation}\label{InSup}
	\frac{1}{4}+\frac{2}{a_n}>(\frac{1}{2}+\frac{4}{a_n})\frac{(x_n-1)^2}{{x_n}^2+(x_n-1)^2}\geq(\frac{1}{2}+\frac{4}{a_n})\frac{(i-1)^2}{{i}^2+(i-1)^2}
	\end{equation}
	\begin{equation}\label{InSdown}
	\frac{1}{4}-\frac{2}{a_n}<(\frac{1}{2}-\frac{4}{a_n})\frac{(x_n)^2}{(x_n)^2+(x_n-1)^2} \leq (\frac{1}{2}-\frac{4}{a_n})\frac{i^2}{i^2+(i-1)^2}
	\end{equation}
	where we have used the facts that $f(x)=\frac{(x-1)^2}{x^2+(x-1)^2}$ is monotone increasing on $x\ge1$ and that $f(x)=\frac{(x)^2}{x^2+(x-1)^2}$ is monotone decreasing on $x\ge1$. 
	
	Solve the following inequality: 
	\begin{equation}\label{upeq}
	(\frac{1}{2}+\frac{4}{a_n})\frac{(i-1)^2}{{i}^2+(i-1)^2}-(\frac{1}{2}-\frac{4}{a_n})\frac{i^2}{i^2+(i-1)^2}>0.1
	\end{equation}
	We obtain:
	\[a_n<8\frac{2i^2+1-2i}{2i-1+0.1}\]
	This means that with our choice of $a_n$, for all $n \in [N_{i-1}, N_{i})$
	\[(\frac{1}{2}+\frac{4}{a_n})\frac{(i-1)^2}{{i}^2+(i-1)^2}-(\frac{1}{2}-\frac{4}{a_n})\frac{i^2}{i^2+(i-1)^2}>0.1\]

	We want to define distribution of $Z_i$ as $(c_i,1-c_i-b_i,b_i)$ where for $n \in [N_{i-1}, N_{i})$
	
	\begin{equation}
	c_i=(\frac{1}{2}-\frac{4}{a_n})\frac{i^2}{i^2+(i-1)^2}+0.0001
	\end{equation}
	\begin{equation}
	b_i=(\frac{1}{2}+\frac{4}{a_n})\frac{(i-1)^2}{{i}^2+(i-1)^2}-0.0001
	\end{equation}
	Note that here $a_n$ depends only on $i$ (check Definition \ref{defan}) and that $(c_i,1-c_i-b_i,b_i)$ is a valid probability distribution since for all $i \ge 2$, 
	\[(\frac{1}{2}-\frac{4}{a_n})\frac{i^2}{i^2+(i-1)^2}\le 4/5 \cdot 1/2=2/5\]
	and since $a_n>10$ for $i\ge 2$
	\[(\frac{1}{2}+\frac{4}{a_n})\frac{(i-1)^2}{{i}^2+(i-1)^2}<1/2 \cdot(1/2+4/10)=9/20\]
	
	Now, compare Equation (\ref{Sdown}), (\ref{Sup}) with Inequality (\ref{InSdown}), (\ref{InSup}) to see that $Z_i$ indeed is stochastic smaller than $S_n-S_{n-1}$ for each phase $i$. 
 	And $\mathbb{E}(Z_i)>0.1-0.0002=0.0998>0.01$ by Inequality (\ref{upeq}).
\end{proof}

\subsection{``Divergent Speed'' of Auxiliary Random Walk}
Define $I_{i,m}:=\sum_{j=1}^{m}Z_{i}$. Since $Z_i$ is strictly bounded by $[-1,1]$, by Hoeffding's inequality, for any $t>0$,
\begin{equation}
P(|I_{i,m}-E(I_{i,m})|\geq t)\leq 2\exp(-\frac{t^2}{2m})
\end{equation}

For any $n>1$, let the "bound" $t$ be $t_m=2\sqrt{Kn\ln(n)}$ for each $n$. Then the probability of "exceeding the bound" for each $m$ is

\begin{equation}
P\bigg(|I_{i,m}-0.01m|\geq 2\sqrt{Km\ln(m)}\bigg)\leq \frac{2}{m^{2K}} 
\end{equation}

Therefore,
\begin{equation}
P\bigg(I_{i,m}>0.01m- 2\sqrt{Km\ln(m)}\bigg)>1- \frac{2}{m^{2K}} 
\end{equation}

\subsection{Inductive Events for Condition for Constructing $Z_i$}
Since eventually we seek to lower bound $S_n-S_{n-1}$ during phase $i$ with the i.i.d auxiliary variables $Z_i$, we must ensure that $x_n\ge i$ during phase $i$ even under the worst case scenario where $S_n-S_{n-1}=-1$ for the entire duration of phase $i$. This requires that we set up a stronger inductive hypothesis in the sense that there is some ``overshoot''. 

First, we will consider phase 1. During phase 1, that is $n<N_1$, $a_n=8$. Check Equation (\ref{Sup}, (\ref{Sdown}), $S_{n+1}-S_{n}\ge 0$ with probability 1 during phase 1, and that $P(S_n-S_{n-1}=1)\ge 1/5$. For a particular choice of $M, \sigma$, the definition of $N_1=M_0(\sigma, M)$ in Definition \ref{DefNi} ensures that $P(S_{N_1}>M)>(1-\sigma/2)$. 

Now we will proceed to set up the induction. Consider the following events:

	\[\Omega_1=\{S_{M_0}>M\}\]
	\[\Omega_i=\{S_{N_i} \ge M+4(i-1)\}, \forall i \ge 2\]
	We need to first prove some lemmas.
	\medskip
	\begin{Lemma} [Satisfy Condition for Constructing $Z_i$]
		For each $i \ge 2$, under event $\cap_{j=1}^{i-1}\Omega_{i}$, we have that $x_n \ge i$ for all $n \in [N_{i-1}, N_i)$.
	\end{Lemma}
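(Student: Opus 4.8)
The plan is to prove the claim by induction on $i$, tracking the worst-case value of $S_n$ over phase $i$ given the conditioning events. Fix $i \ge 2$ and assume the event $\cap_{j=1}^{i-1}\Omega_j$ occurs, so in particular $\Omega_{i-1}$ holds. First I would split into the case $i = 2$ and the case $i \ge 3$, because $\Omega_1$ has a different form ($\{S_{N_1} > M\}$) than $\Omega_{i-1} = \{S_{N_{i-1}} \ge M + 4(i-2)\}$ for $i \ge 3$; in either case we get the uniform lower bound $S_{N_{i-1}} \ge M + 4(i-2)$ at the start of phase $i$ (using $S_{N_1} > M$, i.e. $S_{N_1}\ge M+1 \ge M+4(2-2)$, for $i=2$).

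Next, the key quantitative step: the length of phase $i$ is $N_i - N_{i-1} = M - 2 + 2(i-2)$ steps, and in the worst case $S$ decreases by $1$ at every step of the phase, so for every $n \in [N_{i-1}, N_i)$ we have
\[
S_n \ge S_{N_{i-1}} - (N_i - N_{i-1}) \ge \big(M + 4(i-2)\big) - \big(M - 2 + 2(i-2)\big) = 2(i-2) + 2 = 2(i-1).
\]
Then I would translate the bound on $S_n$ into a bound on $x_n$ via the "Flatten the Stair" definition: since $S_n = 2(x_n - 1)$ when $X_n = (x_n, x_n)$ and $S_n = 2x_n - 3$ when $X_n = (x_n, x_n - 1)$, in both cases $x_n \ge (S_n + 2)/2 \ge \lceil (S_n+1)/2\rceil$; from $S_n \ge 2(i-1)$ this yields $x_n \ge i$. (One should double-check the off-by-one in the $i=j+1$ case: $S_n = 2x_n - 3 \ge 2(i-1)$ gives $x_n \ge i - 1 + 3/2$, hence $x_n \ge i+1 > i$, which is even stronger, so the binding case is $i=j$.) This establishes $x_n \ge i$ throughout $[N_{i-1}, N_i)$, which is exactly the claim.

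The main obstacle I anticipate is purely bookkeeping rather than conceptual: getting all the index shifts exactly right — the precise form of $\Omega_{i-1}$ versus $\Omega_i$, the phase lengths $N_i - N_{i-1} = M-2+2(i-2)$, and the conversion factor of $2$ between $S_n$ and $x_n$ — so that the worst-case drop over the phase is still comfortably absorbed by the "overshoot" $M + 4(i-1)$ built into the definition of $\Omega_i$. I would also want to note explicitly that this lemma only needs the conditioning $\cap_{j=1}^{i-1}\Omega_j$ (the past), not $\Omega_i$ itself, so there is no circularity when it is later combined with the Hoeffding estimate on $I_{i,m}$ to propagate the induction to $\Omega_i$. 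No delicate estimate is required here; the Hoeffding/CLT work is deferred to the subsequent step where $\Omega_i$ is actually shown to hold with high probability.
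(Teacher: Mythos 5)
Your proposal is correct and follows essentially the same argument as the paper: lower-bound $S_{N_{i-1}}$ by $M+4(i-2)$ from the conditioning event, subtract the worst-case drop of one per step over the phase length $M-2+2(i-2)$ to get $S_n \ge 2(i-1)$, and convert back to $x_n \ge i$. Your explicit handling of the $i=2$ case and of the two branches of the flattening map is slightly more careful than the paper's, but the substance is identical.
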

	\begin{proof}
		We know that under event $\cap_{j=1}^{i-1}\Omega_{i}$, $S_{N_{i-1}} \ge M+4(i-2)$. We now claim that for $N_{i-1}\le n< N_i$, $x_n \ge i$: assume the worst case where $S_n-S_{n-1}=-1$ always for $N_{i-1}\le n< N_i$; since $N_i-N_{i-1}=M-2+2(i-2)$, we know that the smallest $S_n$ is achieved at $S_{N_i}$: 
		$$S_{N_{i}}\ge S_{N_{i-1}} - (M-2+2(i-2)) \ge M+4(i-2)-M+2-2i+4=2i-2.$$
		This implies that the smallest value possible for $x_n, n \in [N_{i-1}, N_i)$ is $x_{N{i}}\ge i$. So $x_n \ge i, n \in [N_{i-1}, N_i)$. 
	\end{proof}

\subsection{Stochastic Domination over $Z_i$}
	Before we prove the next lemma, we restate the following theorem concerning monotone coupling and stochastic domination:
	\bigskip
	\begin{Theorem}
		The real random variable $W$ is stochastically larger than $V$ if and only if there is a coupling between $W,V$ such that
		\[P(W\ge V)=1\]
	\end{Theorem}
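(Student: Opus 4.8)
The plan is to prove the standard monotone coupling characterization of stochastic domination for real-valued random variables, using the quantile (inverse CDF) transform. First I would recall the definition: $W$ is stochastically larger than $V$ means $P(W \ge s) \ge P(V \ge s)$ for all $s \in \mathbb{R}$, equivalently $F_W(s) \le F_V(s)$ for all $s$, where $F_W, F_V$ are the respective cumulative distribution functions. The ``if'' direction is immediate: if a coupling $(\tilde W, \tilde V)$ on a common probability space satisfies $P(\tilde W \ge \tilde V) = 1$, then for any $s$, the event $\{\tilde V \ge s\}$ is contained (up to a null set) in $\{\tilde W \ge s\}$, so $P(W \ge s) = P(\tilde W \ge s) \ge P(\tilde V \ge s) = P(V \ge s)$.

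For the ``only if'' direction I would use the generalized inverse. Define the quantile functions $F_W^{-1}(u) = \inf\{x : F_W(x) \ge u\}$ and $F_V^{-1}(u) = \inf\{x : F_V(x) \ge u\}$ for $u \in (0,1)$. Take $U$ uniform on $(0,1)$ and set $\tilde W = F_W^{-1}(U)$, $\tilde V = F_V^{-1}(U)$. The key facts are (a) $F_W^{-1}(U)$ has distribution $F_W$ and $F_V^{-1}(U)$ has distribution $F_V$ (the standard inverse-transform lemma, which follows from the identity $F_W^{-1}(u) \le x \iff u \le F_W(x)$), so this is a genuine coupling; and (b) since $F_W(s) \le F_V(s)$ pointwise, the generalized inverses satisfy the reversed inequality $F_W^{-1}(u) \ge F_V^{-1}(u)$ for every $u \in (0,1)$ — intuitively, a uniformly smaller CDF has a uniformly larger quantile function. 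Hence $\tilde W \ge \tilde V$ surely, giving $P(\tilde W \ge \tilde V) = 1$.

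The only genuinely delicate point — and the one I would write out carefully — is step (b), the monotonicity transfer from CDFs to their generalized inverses, because the CDFs need not be continuous or strictly increasing (indeed the $Z_i$ and the increments $S_{n+1}-S_n$ in this paper are lattice-valued with atoms at $\{-1,0,1\}$). I would argue: fix $u$ and let $x = F_V^{-1}(u)$, so $F_V(x) \ge u$ by right-continuity of $F_V$; then $F_W(x) \le F_V(x)$ would only give $F_W(x) \le F_V(x)$, which is the wrong direction, so instead I take any $y < F_V^{-1}(u)$, note $F_V(y) < u$ by definition of the infimum, hence $F_W(y) \le F_V(y) < u$, which forces $F_W^{-1}(u) \ge y$; letting $y \uparrow F_V^{-1}(u)$ yields $F_W^{-1}(u) \ge F_V^{-1}(u)$. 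Everything else is bookkeeping, so I would keep the exposition to the inverse-transform lemma plus this one monotonicity observation, and remark that in our discrete setting one could alternatively give a direct finite combinatorial coupling of the three-point distributions, but the quantile construction is cleanest and handles the general case at once.
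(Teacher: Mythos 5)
Your proof is correct. Note, however, that the paper itself does not prove this statement at all: it is explicitly ``restated'' as a known fact (the one-dimensional monotone coupling / Strassen characterization of stochastic dominance) and used as a black box, so there is no paper proof to compare against. Your quantile-transform argument is the standard and cleanest way to supply the missing proof: the ``if'' direction by monotonicity of the events $\{\tilde V\ge s\}\subseteq\{\tilde W\ge s\}$, and the ``only if'' direction by setting $\tilde W=F_W^{-1}(U)$, $\tilde V=F_V^{-1}(U)$ for a single uniform $U$. You also correctly isolate and handle the only delicate step, namely that $F_W\le F_V$ pointwise forces $F_W^{-1}(u)\ge F_V^{-1}(u)$ for the generalized inverses even when the CDFs have atoms and flat pieces; the argument via $y<F_V^{-1}(u)\Rightarrow F_W(y)\le F_V(y)<u\Rightarrow F_W^{-1}(u)\ge y$ is exactly right. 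One caution about how the theorem is actually deployed in the paper: the subsequent Remark and Lemma 4.4 need a \emph{sequential} version, coupling each increment $S_{n+1}-S_n$ (whose law depends on the current state $x_n$ and hence on the past) with an independent copy of $Z_i$, conditionally on the history. Your theorem as proved handles a single pair of marginals; extending it to the process level requires applying the quantile coupling step by step with an independent uniform at each step, conditionally on the trajectory so far. That extension is routine but is the part the paper glosses over, so if you wanted your write-up to fully support Lemma 4.4 you would add that one conditional-coupling paragraph.
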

	\textbf{Remark. } This means that if $W_n$ is stochastically larger than $V_n$ respectively for all $n>1$, we may find a coupling for each $n$ between $W_n$ and $V_n$ such that $P(\sum{W_n}\ge \sum{V_n})=1$. For us, we intend to, for phase $i$, choose $W_n$ as $S_n-S_{n-1}$ and $V_n$ as i.i.d $Z_{n,i}\sim Z_i$. 
	\bigskip
	\begin{Lemma}[Bound $S_n$ using Stochastic Domination]\label{leylemma}
		For each $i \ge 2$, for all $n \in [N_{i-1}, N_i)$,
		\[P\{S_{N_i}\ge M+4(i-1)|\cap_{j=1}^{i-1}\Omega_{i}\}\ge 1-\frac{2}{(M-2+2(i-2))^{2K}}\]
	\end{Lemma}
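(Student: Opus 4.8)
The plan is to combine the stochastic-domination setup with the Hoeffding tail bound already derived. Under the conditioning event $\cap_{j=1}^{i-1}\Omega_j$, the previous lemma guarantees $x_n \ge i$ for all $n \in [N_{i-1}, N_i)$, which is exactly the hypothesis needed for the Proposition that produces the i.i.d.\ auxiliary variables $Z_i$ with $Z_i$ stochastically smaller than each increment $S_{n+1}-S_n$ on phase $i$ and $\mathbb{E}(Z_i) \ge 0.01$. So on this event I would couple, step by step, the true increments $S_{n+1}-S_n$ with i.i.d.\ copies $Z_{n,i} \sim Z_i$ so that $S_{n+1}-S_n \ge Z_{n,i}$ almost surely (using the monotone-coupling theorem just quoted, extended additively as in its Remark). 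Summing over the $m := N_i - N_{i-1} = M-2+2(i-2)$ steps of phase $i$ gives, almost surely on the conditioning event,
\[
S_{N_i} - S_{N_{i-1}} \ge \sum_{j=1}^{m} Z_{j,i} = I_{i,m}.
\]

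**Next I would** invoke the Hoeffding estimate established in the subsection ``Divergent Speed of Auxiliary Random Walk'': with $t_m = 2\sqrt{K m \ln m}$,
\[
P\Bigl(I_{i,m} > 0.01 m - 2\sqrt{K m \ln m}\Bigr) > 1 - \frac{2}{m^{2K}}.
\]
On the conditioning event we also have $S_{N_{i-1}} \ge M + 4(i-2)$ (this is the event $\Omega_{i-1}$ for $i \ge 3$, and for $i=2$ it is $\Omega_1 = \{S_{N_1} > M\}$, noting $4(i-2)=0$). Hence, outside a set of conditional probability at most $2/m^{2K}$,
\[
S_{N_i} \ge M + 4(i-2) + 0.01 m - 2\sqrt{K m \ln m}.
\]
It then remains to check the deterministic inequality $0.01 m - 2\sqrt{K m \ln m} \ge 4$ with $m = M - 2 + 2(i-2)$. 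Since $m \ge M-2$ and by the choice of $M$ we have $0.01 \cdot m' - 2\sqrt{K m' \ln m'} \ge 4$ for all $m' \ge M-2$, this inequality holds, giving $S_{N_i} \ge M + 4(i-2) + 4 = M + 4(i-1)$ on the good event. That is exactly the statement $P\{S_{N_i} \ge M + 4(i-1) \mid \cap_{j=1}^{i-1}\Omega_j\} \ge 1 - 2/m^{2K}$ with $m = M-2+2(i-2)$.

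**The main obstacle** I anticipate is making the step-by-step coupling rigorous: the increments $S_{n+1}-S_n$ are not i.i.d.\ and are not even independent of the past (the transition probabilities depend on $x_n$, hence on the trajectory), so the domination $Z_i \preceq S_{n+1}-S_n$ only holds conditionally on $\{x_n \ge i\}$, not unconditionally. The clean way to handle this is to build the coupling conditionally at each step: given the history up to time $n$ (on which $x_n \ge i$ holds, by the preceding lemma applied under $\cap_{j=1}^{i-1}\Omega_j$), the conditional law of $S_{n+1}-S_n$ stochastically dominates the law of $Z_i$, so one can realize both on a common probability space with $S_{n+1}-S_n \ge Z_{n,i}$ pointwise while keeping the $Z_{n,i}$ independent of everything — this is a standard sequential/Markovian coupling argument. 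A minor secondary point is the edge case $i=2$, where the conditioning event is $\Omega_1$ alone and one must track that $M_0 = N_1$ was chosen precisely so that $S_{N_1} > M$ with the required probability; but that was absorbed into $\Omega_1$'s definition and does not affect this lemma's conditional statement. Once the conditional coupling is in place, the rest is the bookkeeping inequality above, which is routine given the choice of $M$.
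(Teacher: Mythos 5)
Your proposal is correct and follows essentially the same route as the paper's own proof: condition on $\cap_{j=1}^{i-1}\Omega_j$ to guarantee $x_n\ge i$, couple the phase-$i$ increments with i.i.d.\ copies of $Z_i$ so that $S_{N_i}-S_{N_{i-1}}\ge I_{i,m}$, and then apply the Hoeffding bound together with the defining property of $M$ that $0.01m-2\sqrt{Km\ln m}\ge 4$ for $m\ge M-2$. Your additional remark on making the sequential coupling rigorous (conditioning on the history at each step) is a point the paper glosses over, but it does not change the argument.
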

	\begin{proof}
		Note that $N_i-N_{i-1}=M-2+2(i-2)\ge M-2,\forall i\ge2$. And from definition of $M$, we know that for all $m \ge M-2$
		\begin{equation}\label{inst1}
			0.01m- 2\sqrt{Km\ln(m)}\ge 4.
		\end{equation}

		We have also shown that 
		\begin{equation}\label{inst2}
		P\bigg(I_{i,m}>0.01m- 2\sqrt{Km\ln(m)}\bigg)>1- \frac{2}{m^{2K}} 
		\end{equation}
		From the previous lemma, we know that for $n \in [N_{i-1}, N_i)$ we may couple $S_{n+1}-S_{n}$ with $Z_i$ since $x_n\ge i$. This gives us that
		\[P\{S_{N_i} \ge M+4(i-1)|\cap_{j=1}^{i-1}\Omega_{i}\} \ge P\{S_{N_{i-1}}+I_{i,N_i-N_{i-1}} \ge M+4(i-1)|\cap_{j=1}^{i-1}\Omega_{i}\}\]
		\[\ge P\bigg\{I_{i,M-2+2(i-2)} \ge M+4(i-2)-M-4(i-2)=4\bigg\}\]
		\[\ge P\bigg\{I_{i,M-2+2(i-2)} \ge 
		\]\[0.01(M-2+2(i-2))-2 \sqrt{K(M-2+2(i-2))\ln{(M-2+2(i-2))}})\bigg\}\]
		\[> 1- \frac{2}{(M-2+2(i-2))^{2K}}\]
		where the stochastic domination is used at the first inequality, the second inequality holds because we condition on event $\cap_{j=1}^{i-1}\Omega_{i}$ and we dropped conditioning afterwards since $I_{i,M-2+2(i-2)}$ are just sum of i.i.d. $Z_i$'s, the third inequality is by using Inequality (\ref{inst1}) above, and the forth inequality is by using Inequality (\ref{inst2}) above. 
	\end{proof}

	\begin{Corollary}\label{cor}
		From Lemma \ref{leylemma}, we have
		\[P(S_n \to \infty)\ge (1-\sigma/2)\cdot \Pi_{j=0}^\infty(1-\frac{2}{(M-2+2j)^{2K}})\]
	\end{Corollary}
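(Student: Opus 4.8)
The strategy is to chain together the one-phase estimates from Lemma \ref{leylemma} with the phase-1 estimate stated just before the inductive events, using the nested structure of the events $\Omega_i$. First I would observe that on the event $\bigcap_{i=1}^\infty \Omega_i$ we have $S_{N_i} \ge M + 4(i-1) \to \infty$ along the subsequence $\{N_i\}$, but more is true: the same argument that proved the ``satisfy condition'' Lemma shows that on $\bigcap_{j=1}^{i}\Omega_j$ the walk stays at height $x_n \ge i$ for all $n \in [N_{i-1}, N_i)$, and in particular $S_n$ cannot drop below $2i-2$ during phase $i$. Hence on $\bigcap_{i\ge 1}\Omega_i$ we actually get $\liminf_n S_n = \infty$, i.e. $S_n \to \infty$ (equivalently $X_n \to \infty$). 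So it suffices to lower bound $P\!\left(\bigcap_{i\ge 1}\Omega_i\right)$.

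Next I would write this probability as a telescoping product of conditional probabilities:
\[
P\Big(\bigcap_{i=1}^\infty \Omega_i\Big) = P(\Omega_1)\prod_{i=2}^\infty P\Big(\Omega_i \,\Big|\, \bigcap_{j=1}^{i-1}\Omega_j\Big).
\]
The phase-1 discussion gives $P(\Omega_1) = P(S_{N_1} > M) > 1-\sigma/2$ (this is exactly how $N_1 = M_0(\sigma,M)$ was chosen in Definition \ref{DefNi}, via the Bernoulli CLT bound, using that during phase 1 the increments are nonnegative and equal $1$ with probability at least $1/5$). Lemma \ref{leylemma} gives, for each $i \ge 2$,
\[
P\Big(\Omega_i \,\Big|\, \bigcap_{j=1}^{i-1}\Omega_j\Big) \ge 1 - \frac{2}{(M-2+2(i-2))^{2K}}.
\]
Substituting and reindexing $j = i-2$ so that $M-2+2(i-2)$ runs over $M-2, M, M+2, \dots$ yields
\[
P\Big(\bigcap_{i=1}^\infty \Omega_i\Big) \ge (1-\sigma/2)\prod_{j=0}^\infty\Big(1 - \frac{2}{(M-2+2j)^{2K}}\Big),
\]
which is the claimed bound.

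The only genuine content beyond bookkeeping is convergence of the infinite product: I would note that $\sum_{j\ge 0} (M-2+2j)^{-2K} < \infty$ as soon as $2K > 1$, i.e. $K \ge 1$, so the product converges to a strictly positive limit. This is where the free parameter $K$ (hidden in the choice of $M$ and in $t_m = 2\sqrt{Km\ln m}$) is used: one must take $K$ large enough that the tail product exceeds $\sigma/2 / (1-\sigma/2)$ — or rather, since this Corollary only asserts the displayed inequality, that step is deferred, and the actual comparison with $\sigma$ (to conclude $P(X_n\to\infty) > \sigma$, equivalently $> 1-\sigma$ after relabeling) is carried out afterward by choosing $K$ large. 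The main obstacle, such as it is, is purely organizational: making sure the conditioning in the telescoping product is handled correctly given that each $\Omega_i$ for $i \ge 2$ depends only on $S_{N_i}$ while the coupling with the i.i.d.\ $Z_i$'s is valid only on the good event $\bigcap_{j<i}\Omega_j$ — but Lemma \ref{leylemma} already packages exactly this, so no new difficulty arises.
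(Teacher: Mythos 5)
Your proposal is correct and follows essentially the same route as the paper: telescope $P(\cap_i\Omega_i)$ into $P(\Omega_1)\prod_{i\ge2}P(\Omega_i\mid\cap_{j<i}\Omega_j)$, bound the first factor by $1-\sigma/2$ via the phase-1 discussion and the rest via Lemma \ref{leylemma}, and note that the intersection forces $S_n\to\infty$. You are in fact slightly more careful than the paper on the last point (justifying that $S_n$ cannot drop far within a phase, so divergence holds along all $n$ and not just the subsequence $N_i$); the only minor mismatch is your closing remark about taking $K$ large, whereas the paper fixes $K=1$ and instead sends $M\to\infty$, but that concerns the subsequent deduction of the main theorem, not the corollary itself.
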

	\begin{proof}
		Lemma \ref{leylemma} essentially provides us with the following:
		\[P\{\Omega_i|\cap_{j=1}^{i-1}\Omega_j\} \ge 1-\frac{2}{(M-2+2j)^{2K}}\]
		Since $\cap_{j=1}^{\infty}\Omega_j$ implies that $\{S_n \to \infty, n \to \infty\}$, the claim follows by induction. 
	\end{proof}

\subsection{Main Theorem}
Let $K=1$. Fix $\sigma>0$. Note that we may choose $M$ to be arbitrary larger and Corollary \ref{cor} will hold for all the random walks defined with these $M$. Note in addition the fact that $\lim_{M\to \infty}\Pi_{j=0}^\infty(1-\frac{2}{(M-2+2j)^{2}})=1.$ The main theorem is thus proved.

\bibliography{learningnotes}	
\end{document}